\documentclass{amsproc}
\usepackage{amsmath, tikz-cd}
\usepackage{multicol}
\usepackage{subfigure}
\usepackage{amssymb}
\usepackage{amsfonts}
\usepackage{enumitem}
\usepackage{amsthm}
\usepackage{xcolor}
\usepackage{mathtools}

\begin{document}
\title{Rhaly-Type Operators in Several Complex Variables}
 
\author{Michael R. Pilla}

\subjclass{47A13, 47A05, 47B32}

\keywords{Rhaly Operator, Ces\`aro Operator, Drury-Arveson Space}

\maketitle

\bibliographystyle{amsplain}

\numberwithin{equation}{section}
\theoremstyle{plain}
\newtheorem{Proposition}[equation]{Proposition}
\newtheorem{Corollary}[equation]{Corollary}
\newtheorem*{Corollary*}{Corollary}
\newtheorem{Theorem}[equation]{Theorem}
\newtheorem*{Theorem*}{Theorem}
\newtheorem{Lemma}[equation]{Lemma}
\theoremstyle{definition}
\newtheorem{Definition}[equation]{Definition}
\newtheorem{Conjecture}[equation]{Conjecture}
\newtheorem{Example}[equation]{Example}
\newtheorem{Exercise}[equation]{Exercise}
\newtheorem{Remark}[equation]{Remark}
\newtheorem{Question}[equation]{Question}

\setlength{\parskip}{12pt}
\setlength{\parindent}{0in}

\newcommand{\R}{\mathbb{R}}
\newcommand{\C}{\mathbb{C}}
\newcommand{\D}{\mathbb{D}}

\begin{abstract}
Recently, a generalization of the Ces\`aro operator to several variables was introduced as a tuple acting on the Drury-Arveson space \cite{P}. We determine the norm of these Ces\`aro-type operators and utilize it along with this Ces\`aro tuple definition to generalize the classical Rhaly operator and recover some basic facts about this operator when applied to the generalization.
\end{abstract}

\section{Introduction}

\subsection{The Rhaly Operator}

Rhaly operators, also known as terraced operators, are one well-studied way in which the celebrated Ces\`aro operator has been generalized \cite{Ma}, \cite{R}, \cite{R1}, \cite{R2}. Given a sequence of complex numbers $\{a_n\}$, the Rhaly operator is defined by 

\begin{equation}
R_{\{a_n\}}=
\begin{pmatrix}
a_1 & 0 & 0 & \cdots \\
a_2 & a_2 & 0 & \cdots \\
a_3 & a_3 & a_3 & \cdots\\
\vdots & \vdots & \vdots & \ddots.
\end{pmatrix}
\end{equation}

Setting $a_n=\frac{1}{n+1}$ gives the classical Ces\`aro operator \cite{Brown}, \cite{Ross}, \cite{Ross2}. Usually this operator is handled as acting on $\ell^2$ (and hence the Hardy space $H^2(\mathbb{D})$), although it can apply to any sequences space. Perhaps the most celebrated result due to Rhaly \cite{R2} is the fact that if $L=\lim_{n \rightarrow \infty}(n+1)|a_n|$ exists then the Rhaly operator is bounded. This gives a sufficient condition for determining when a given Rhaly operator is bounded. 

Rhaly operators have enjoyed numerous generalization approaches including to $\ell^p$ spaces \cite{Gala}, \cite{Gala2}, to weighted Hardy spaces \cite{Bell}, to matrices with more general entries \cite{L}, and to more general topological spaces \cite{Dogan}. Rhaly operators have yet to be generalized to reproducing kernel Hilbert spaces acting on several complex variables. Part of the reason for this is because they are a generalization of the classic Ces\`aro operator which had yet to be generalized on these spaces.

Recently, however, the Ces\`aro operator has been generalized to a tuple of operators acting on the Drury Arveson space on the unit ball $H^2_d(\mathbb{B}_d)$, which we refer to as the Ces\`aro tuple, in a way that captures the key features of the classical Ces\`aro operator \cite{P}.

In this article we extend this definition to introduce a tuple of Rhaly-type operators acting on the Drury-Arveson space.

\subsection{The Drury-Arveson Space}

Recall that the Drury-Arveson space on the unit ball $H^2_d(\mathbb{B}_d)$ is the reproducing kernel Hilbert space with kernel 

\begin{equation}k(z,w)=k_w(z)=\frac{1}{1-\langle z, w \rangle}\end{equation}

where $\langle \cdot, \cdot \rangle$ is the standard inner product. It is widely viewed as the appropriate generalization of the classical Hardy space on the disk.

As a quick review of notation in several variables. Recall that for $z=(z_1,...,z_d) \in \mathbb{C}^d$, we let 

\begin{equation}z^{\alpha}=\prod_{i=1}^d z_i^{\alpha_i}\end{equation}

for every multi-index $\alpha=(\alpha_1,..., \alpha_d) \in \mathbb{N}^d$. Also we write

\begin{equation}
\alpha !=\prod_{i=1}^d \alpha_i !
 \quad\text{and}\quad
|\alpha|=\sum_{i=1}^d \alpha_i.
\end{equation}

A standard ordering is established using the multi-indices of the monomials $z^{\alpha}$ by simply endowing them with lexographical order. 

For two functions $f,g \in H^2_d$ if we have power series expansions given by

\begin{equation}
f(z)=\sum_{\alpha} c_{\alpha}z^{\alpha}
\quad \text{and} \quad
g(z)=\sum_{\alpha} d_{\alpha}z^{\alpha}
\end{equation}

then we define their inner product to be

\begin{equation}\langle f, g \rangle_{H^2_d}=\langle f, g \rangle=\sum_{\alpha} \frac{\alpha!}{|\alpha|!}c_{\alpha} \overline{d_{\alpha}}.\end{equation}

An orthonormal basis is then given by $\{e_{\alpha}\}$ where

\begin{equation}e_{\alpha}=\sqrt{\frac{|\alpha|!}{\alpha!}}z^{\alpha}.\end{equation}

Notably, the set of monomials $\{z^{\alpha}\}$ form an orthogonal basis for $H^2_d(\mathbb{B}_d)$.

An analytic function $f(z)=\sum_{\alpha}c_{\alpha}z^{\alpha}$ is in $H^2_d$  if

\begin{equation}||f||^2_{\mathbb{H}^2_d}=||f||^2=\sum_{\alpha} |c_{\alpha}|^2\frac{\alpha !}{|\alpha|!}< \infty.\end{equation}

It will often be useful to write $f$ as a decomposition of homogeneous functions $f_n(z)=\sum_{|\alpha|=n} a_{\alpha}z^{\alpha}$ which then gives us $f(z)=\sum_{n=0}^{\infty} f_n(z)$. It immediately follows that $\langle f_n, g_m \rangle=0$ for $m \neq n$.

For further reading on the Drury-Arveson space, the reader is referred to survey articles by M. Hartz \cite{Hartz} and O. Shalit \cite{Shalit} or the original articles by Drury and Arveson \cite{Drury}, \cite{Arveson}. 

\section{The Ces\`aro tuple}

\subsection{Definition}

In \cite{P}, the definition for a Ces\`aro tuple is given as follows:

\begin{Definition}
Let $f=\sum_{n=0}^{\infty}f_n$ be the homogeneous expansion of $f \in H^2_d$. For $\{j\}_{1}^d$, we define a $d$-tuple of operators given by $[C_1,...,C_d]$ where, for $1 \leq j \leq d$,

\begin{equation}
C_jf=f_0+\frac{1}{2}(z_j f_0+f_1)+ \frac{1}{3} (z_j^2 f_0 + z_j f_1 + f_2) + \ldots 
\end{equation}

\end{Definition}

Since the monomials form an orthogonal basis for $H^2_d$ and each $C_j$ is a bounded linear operator, we may equivalently define this operator via its action on the monomials $z^{\alpha}$. 

\begin{Definition}
We define our $d$-tuple of operators given by $[C_1,...,C_d]$ for $1 \leq j \leq d$ via their action on monomials by

\begin{equation}
C_jz^{\alpha}=\sum_{k=0}^{\infty}\frac{1}{|\alpha|+k+1}z^{\alpha+ke_j}.\label{def}
\end{equation}

As demonstrated in \cite{P}, numerous key properties of the classical Ces\`aro operator hold when applied to this tuple, giving support to the notion that it deserves to be consider the appropriate generalization of the classical Ces\`aro operator to the Drury-Arveson space.

While it is shown in \cite{P} that each $C_j$ is bounded, it remains to determine the exact norm of $C_j$. As we will recall, this norm plays a key role in determining our sufficient condition for the boundedness of our generalized Rhaly operators.

\end{Definition}

\subsection{The Norm of $C_j$}

We next explicitly compute the norm $||C_j||$ and find that it coincides with the norm of the classical Ces\`aro operator.

\begin{Theorem}
Given the Ces\`aro Tuple $\{C_j\}$, we have that $||C_j||=2$. \label{C}
\end{Theorem}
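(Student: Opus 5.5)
The plan is to reduce $C_j$ to a direct sum of one-variable operators, each dominated entrywise by the classical Ces\`aro matrix (whose norm on $\ell^2$ is $2$), with one summand being exactly the classical Ces\`aro operator.

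\textbf{Decomposition.} For each multi-index $\beta$ with $\beta_j=0$, let $\mathcal{H}_\beta$ be the closed span of $\{z^{\beta+me_j}: m\ge 0\}$. Every monomial $z^\alpha$ lies in exactly one $\mathcal{H}_\beta$, namely the one with $\beta=\alpha-\alpha_je_j$. Since the monomials are an orthogonal basis, $H^2_d=\bigoplus_\beta \mathcal{H}_\beta$ orthogonally. By \eqref{def}, $C_j z^{\beta+me_j}$ is a combination of the monomials $z^{\beta+(m+k)e_j}$, so $C_j$ maps each $\mathcal{H}_\beta$ into itself. Hence $C_j=\bigoplus_\beta C_j|_{\mathcal{H}_\beta}$ and
\[
\|C_j\|=\sup_\beta \|C_j|_{\mathcal{H}_\beta}\|.
\]

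\textbf{Matrix of each block.} Fix $\beta$ and set $n=|\beta|$. Use the orthonormal basis $u_m=z^{\beta+me_j}/w_m$ of $\mathcal{H}_\beta$, where
\[
w_m^2=\|z^{\beta+me_j}\|^2=\frac{\beta!\,m!}{(n+m)!}.
\]
For $p\ge m$, the matrix entries are
\[
\langle C_ju_m,u_p\rangle=\frac{1}{n+p+1}\cdot\frac{w_p}{w_m},
\qquad
\frac{w_p^2}{w_m^2}=\prod_{i=1}^{p-m}\frac{m+i}{n+m+i}\le 1,
\]
and the entries with $p<m$ are zero. So each entry lies between $0$ and $\frac{1}{p+1}$. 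That is, the block is entrywise dominated by the classical Ces\`aro matrix $C=(c_{pm})$ with $c_{pm}=\frac{1}{p+1}$ for $p\ge m$.

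\textbf{Upper bound.} For matrices $0\le A\le B$ entrywise, we have $|\langle Ax,y\rangle|\le\langle B|x|,|y|\rangle\le\|B\|\,\|x\|\,\|y\|$, so $\|A\|\le\|B\|$. Applying this with $B=C$ and the classical fact $\|C\|_{\ell^2\to\ell^2}=2$ gives $\|C_j|_{\mathcal{H}_\beta}\|\le 2$ for every $\beta$, and hence $\|C_j\|\le 2$.

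\textbf{Lower bound.} Take $\beta=0$. Then $w_m=1$ and the block is exactly $C_jz_j^m=\sum_k \frac{1}{m+k+1}z_j^{m+k}$. This is the classical Ces\`aro operator acting on $H^2(\mathbb{D})$ in the variable $z_j$, which has norm $2$. Therefore $\|C_j\|\ge 2$, and so $\|C_j\|=2$.

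\textbf{Expected obstacle.} The main point needing care is the weight computation: the ratio $w_p/w_m$ must be at most $1$ so that the comparison with the classical Ces\`aro matrix goes through. This holds because the Drury--Arveson weights $\alpha!/|\alpha|!$ decrease along the $e_j$ direction. Beyond that, the argument relies on the known norm $2$ of the classical Ces\`aro operator, which is taken as given.
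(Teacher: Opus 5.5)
Your proof is correct. It shares the paper's skeleton: the same orthogonal decomposition of $H^2_d$ into the $C_j$-invariant subspaces spanned by the monomials $z^{\beta+me_j}$, the reduction $\|C_j\|=\sup_\beta\|C_j|_{\mathcal{H}_\beta}\|$, and the identification of the $\beta=0$ block with the classical Ces\`aro operator.

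Where you differ is the comparison step, and there your route is better than the paper's. The paper writes $\|C_jf\|^2$ as $\sum|a_\alpha|^2\|C_jz^\alpha\|^2$. This treats the images $C_jz^\alpha$ as orthogonal, which they are not. For $f=1+z_j$ the true value is $\|C_jf\|^2=\frac{2\pi^2}{3}-3$, while the paper's formula gives $\frac{\pi^2}{3}-1$. Taken literally, that formula would give norm $\pi/\sqrt{6}$ rather than $2$ for the $\beta=0$ block. The paper then simply asserts that its expression is maximized when $|\alpha'|=0$, with no justification.

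Your argument supplies exactly the missing justification that every block has norm at most $2$. The ingredients are:
\begin{itemize}
\item normalizing by $w_m$,
\item computing $w_p^2/w_m^2=\prod_{i=1}^{p-m}\frac{m+i}{n+m+i}\le 1$, so each entry is at most $\frac{1}{p+1}$,
\item the entrywise-domination lemma for nonnegative matrices.
\end{itemize}
Your separate treatment of $\beta=0$ then gives the matching lower bound. Like the paper, you take the classical fact $\|C\|_{\ell^2\to\ell^2}=2$ as given.
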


\begin{proof}
Note that we can decompose the Drury-Arveson space as a direct sum given by 

\begin{equation}\label{M}
H^2_d(\mathbb{B}_d)=\bigoplus_{\alpha \mid \alpha_j=0}M_{\alpha}
\end{equation}

 where $M_{\alpha}= \overline{\text{span}}\{z^{\alpha+ke_j}, k \geq 0, \alpha_j=0\}$. Since $C_j$ maps each $M_{\alpha}$ into itself, it is block diagonal with respect to the decomposition $\bigoplus_{\alpha \mid \alpha_j=0}M_{\alpha}$ and hence we have that  $||C_j||=\sup_{\alpha} ||C_j|_{\alpha}||$,

Next, let $f(z)=\sum_{\alpha}a_{\alpha}z^{\alpha}$ be in $H^2_d$ and $z^{\alpha'}=\prod_{i \neq j} z_i^{\alpha_i}$. Since $C_j$ is a bounded operator, restricting to $M_{\alpha}$ gives us 

\begin{equation}
C_jf(z)=C_j\left(\sum_{\alpha_j=0}^{\infty} a_{\alpha} z^{\alpha}\right)=\sum_{\alpha_j=0}^{\infty} a_{\alpha}\left(\sum_{k=0}^{\infty}\frac{1}{\alpha_j+|\alpha'|+k+1} z^{\alpha_j+k}\right) \left(z^{\alpha'} \right)
\end{equation}

and hence, since the monomials form an orthogonal basis, we have

\begin{equation}
||C_jf(z)||^2=\sum_{\alpha_j=0}^{\infty}  |a_{\alpha}|^2\left(\sum_{k=0}^{\infty}\frac{1}{(\alpha_j+|\alpha'|+k+1)^2} \frac{(\alpha_j+k)!\alpha'!}{(\alpha_j+\alpha'+k)!}\right)
\end{equation}

which is maximized when $|\alpha'|=0$. Thus for $M_0$, we have 

\begin{equation}
||C_jf(z)||^2=\sum_{\alpha_j=0}^{\infty}  |a_{\alpha}|^2\left(\sum_{k=0}^{\infty}\frac{1}{(\alpha_j+k+1)^2}\right).
\end{equation}

Letting $m=\alpha_j$, denoting $a_{\alpha}=a_m$ and letting $n=m+k$ gives

\begin{equation}
||C_jf(z)||^2=\sum_{m=0}^{\infty}  |a_m|^2\left(\sum_{n=m}^{\infty}\frac{1}{(n+1)^2}\right).
\end{equation}

By the defining properties of the Drury-Arveson space, the Drury-Arveson norm coincides with the Hardy space norm for $|\alpha'|=0$ and the above expression is precisely that of the classical Ces\`aro operator. It well-known that this norm is $2$ and hence we conclude that $||C_j||=2$.

\end{proof}

\section{The Sequence Space Isometrically Isomorphic to $H^2_d$.}

Just as in the case of one variable, one may establish results via a sequence space. We recall the deep connection between $\ell^2$ and $H^2$ in the unit disk and aim to mimic this approach for $H^2_d$ by equipping our standard sequence space with a new norm. For further background on $\ell^2$ spaces, the reader is referred to \cite{Cheng}. Our sequence space will be a weighted $\ell^2$ space. While weighted $\ell^2$ spaces have been previously taken into consideration (see, e.g., \cite{Jameson}), the primary focus has been on weighted Hardy spaces and spaces of analytic function in the unit disk. The author is not aware of previous work on the weighted sequence spaces motivated by function spaces of several variables. 

We first recall some notation. Let $w=\{w_n\}$ be a sequence of positive real numbers. We let $\ell^2(w)$ denote the weighted sequence space $a=\{a_n\}$ such that $||a||_w^2=\sum_{n=0}^{\infty}w_n |a_n|^2<\infty$. Hence $\ell^2$ is the sequence space with $w_n=1$ for all $n$.

Motivated by the Drury-Arveson space, we have the following definition.

\begin{Definition}
Given the standard multivariable ordering, we define $p^2_d$ to be the set of sequences with lexographical ordering ${\bf a}=\{a_{\alpha}\}_{|\alpha|=0}^{\infty}$ with $\alpha=(\alpha_1,...,\alpha_d)$ such that $\sum_{\alpha=0}^{\infty}\frac{\alpha!}{|\alpha|!}|a_{\alpha}|^2<\infty$.
\end{Definition}

We take the convention that the subscript $d$ will be omitted when the argument is not dependent on the number of variables.

Just as in the definition of the Drury-Arveson space, for $a,b \in p^2$, we define the inner product on $p^2$ to be given by $\langle a, b \rangle=\sum_{n=0}^{\infty}\sum_{|\alpha|=n} \frac{\alpha!}{|\alpha|!}a_{\alpha} \overline{b_{\alpha}}$ and thus $||a||=\sqrt{\sum_{\alpha=0}^{\infty}\frac{\alpha!}{|\alpha|!}|a_{\alpha}|^2}$. It's straightforward to observe that this defines an inner product. Additionally, standard functional analysis arguments demonstrate that, since $p^2$ is a weighted $\ell^2$ space with positive real coefficients, it is a complete inner product space and hence a Hilbert space.  In fact, it is straightforward to see that $p^2$ is isometrically isomorphic to $H^2_d$ via $\sigma: H^2_d \rightarrow p^2$ given by the identification $f(z)=\sum_{\alpha} a_{\alpha}z^{\alpha} \in H^2_d \rightarrow \{a_{\alpha}\} \in p^2$.

Thus, when aiming to resolve questions about an operator acting on $H^2_d$, showing our results for $p^2$ will be sufficient.

\section{Rhaly-Type Operators}

\subsection{Definition}

We follow the definition given in \cite{P} to guide our construction. Given a complex sequence $\{a_n\}$, we define a generalized Rhaly-type operator as a tuple of operators $\{R_{(j, a_n)}\}|_{j=1}^d$ acting on $H^2_d(\mathbb{B}_d)$ by

\begin{equation}
R_{(j, a_n)}f=a_0f_0+a_1(z_jf_0+f_1)+a_2(z_j^2f_0+z_jf_1+f_2)+\cdots
\end{equation}

Since the monomials form an orthogonal basis for $H^2_d(\mathbb{B}_d)$, we may also define the Rhaly-type operator via its action on the monomials. We have

\begin{equation}
R_{(j, a_n)}z^{\alpha}=\sum_{k=0}^{\infty} a_k z^{\alpha+ke_j}.
\end{equation}

When $R_{(j, a_n)}$ is bounded, these definitions coincide. Likewise, letting $f(z)=\sum_{k=0}^{\infty}f_k(z)$ be the homogeneous decomposition of $f(z)$, we have

\begin{equation}
R_{(j, a_n)}f=\sum_{n=0}^{\infty}a_n \sum_{m=0}^n z_j^{n-m}f_m(z).
\end{equation}

In the same manner, we may also consider $R_{(j, a_n)}$ as an operator acting on the sequence space $p^2_d$. For properties of boundedness and compactness, results on these spaces coincide and we toggle back and forth between the two as needed.

Next, given a sequence of complex numbers $\{b_n\}$, we denote $D_{(j,b_n)}$ to be the diagonal operator defined by its action on the monomials as follows. Let $\alpha_j$ denote the $j^{\text{th}}$ index in the mult-index notation $z^{\alpha}$. Then we define 

\begin{equation}
D_{(j,b_n)}z^{\alpha}=b_{\alpha_j}z^{\alpha}.
\end{equation}

Since $D_{(j,b_n)}$ is a diagonal operator, it is bounded if $\sup |b_n|<\infty$. Suppose that $D_{(j,b_n)}$ and $R_{(j,a_n)}$ are bounded. One then verifies that for $f(z)=\sum_{\alpha}c_{\alpha}z^{\alpha}$ we have 

\begin{align}
(D_{(j,b_n)}R_{(j,a_n)})f(z)&=D_{(j,b_n)} \left( \sum_{\alpha} c_{\alpha} \left(\sum_{k=0}^{\infty}a_k z^{\alpha+ke_j} \right)\right)\\
&=\sum_{\alpha} c_{\alpha}\sum_{k=0}^{\infty}a_k D_{(j,b_n)}z^{\alpha+ke_j}=\sum_{\alpha} c_{\alpha}\sum_{k=0}^{\infty}a_k b_k z^{\alpha+ke_j}\\
&=(R_{(j,b_na_n)})f(z).
\end{align}

and hence $D_{(j,b_n)}R_{(j,a_n)}=R_{(j,b_na_n)}$. Next, using the convention $\frac{|a_n|}{a_n}=\frac{a_n}{|a_n|}=1$ when $a_n=0$, it immediately follows that

\begin{equation}
D_{(j,\frac{|a_n|}{a_n})}R_{(j, a_n)}=R_{(j, |a_n|)} \qquad D_{(j,\frac{a_n}{|a_n|})}R_{(j, |a_n|)}=R_{(j, a_n)}. \label{D}
\end{equation}

Furthermore it immediately follows, as in the one-dimensional case, that 

\begin{equation}
D_{(j, (n+1)a_n)}C_{j}=R_{(j, a_n)} \qquad \text{and} \qquad D_{(j,\frac{1}{(n+1)a_n})}R_{(j, a_n)}=C_{j}. \label{B}
\end{equation}

\subsection{Boundedness}

In establishing the boundedness conditions of our operator, we follow the strategies of the initial proofs given by Rhaly in the one-dimensional case \cite{R2} and adapt them to the case of several variables.

We began by showing that we may assume our sequence is positive.

\begin{Theorem}
$||R_{(j, a_n)}||=||R_{(j, |a_n|)}||$ and $R_{(j, a_n)}$ is bounded if and only if $R_{(j, |a_n|)}$ is bounded.
\end{Theorem}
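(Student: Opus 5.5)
The plan is to use the factorizations in \eqref{D} together with the fact that the diagonal operators $D_{(j,\frac{|a_n|}{a_n})}$ and $D_{(j,\frac{a_n}{|a_n|})}$ are unitary, so in particular of norm one. With the convention $\frac{|a_n|}{a_n}=1$ when $a_n=0$, every entry $b_n$ of these diagonal sequences has modulus one. Since $D_{(j,b_n)}z^{\alpha}=b_{\alpha_j}z^{\alpha}$ and the monomials are orthogonal, for $f=\sum_\alpha c_\alpha z^\alpha$ we get
\[
\|D_{(j,b_n)}f\|^2=\sum_\alpha \frac{\alpha!}{|\alpha|!}|b_{\alpha_j}|^2|c_\alpha|^2=\|f\|^2 .
\]
So each such $D$ is an isometry. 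It is also surjective, with inverse given by the diagonal operator with entries $\overline{b_n}$, and hence unitary.

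For the norm identity, I would argue at the level of the action on monomials and on finite linear combinations, which is dense. This avoids presupposing boundedness, which is part of what is to be shown. For any polynomial $f$, the computation preceding \eqref{D} (which only uses linearity on finite sums) gives
\[
R_{(j,|a_n|)}f=D_{(j,\frac{|a_n|}{a_n})}R_{(j,a_n)}f .
\]
One should check that $D$ acts termwise on the formal power series $R_{(j,a_n)}f$; this holds because both sides are defined coefficientwise. Taking norms, in the extended sense allowing $\infty$, gives $\|R_{(j,|a_n|)}f\|=\|R_{(j,a_n)}f\|$ for every polynomial $f$. The same conclusion follows symmetrically from the second identity in \eqref{D}.

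The conclusion then follows. If one operator is bounded on polynomials, so is the other, with the same bound, and both extend to bounded operators with equal norms. Since polynomials are dense, these extensions coincide with the operators defined through the homogeneous expansion. This yields both $\|R_{(j,a_n)}\|=\|R_{(j,|a_n|)}\|$ and the equivalence of boundedness.

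The main obstacle is justifying the manipulation when boundedness is not yet known. I would handle it by making the coefficientwise observation explicit: the coefficient of $z^{\alpha+ke_j}$ in $R_{(j,a_n)}f$ is a finite sum $\sum c_{\beta}a_{k'}$, taken over decompositions $\beta+k'e_j=\alpha+ke_j$. In this sum the $j$-th index $\alpha_j+k$ of the output monomial does not determine $k'$ alone, so I need to be careful here. Each term carries its own factor $a_{k'}$, while $D$ multiplies by $b_{\alpha_j+k}$, which depends on the output index rather than on $k'$. The factorization \eqref{D} is therefore only clean when contributions are grouped correctly, and I would recheck it. If it fails, I would fall back on a direct comparison of norms on each invariant subspace $M_\alpha$ as in the proof of Theorem \ref{C}.
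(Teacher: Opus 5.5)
Your route is the paper's: both arguments rest on the factorizations \eqref{D} by unimodular diagonal operators of norm one. The proposal has a genuine gap, though. It is the one you flag at the end and leave open, and your suspicion is correct.

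With $D_{(j,b_n)}z^{\beta}=b_{\beta_j}z^{\beta}$ one gets $D_{(j,b_n)}z^{\alpha+ke_j}=b_{\alpha_j+k}\,z^{\alpha+ke_j}$. The computation preceding \eqref{D} silently uses $b_k$ in its place, which is legitimate only when $\alpha_j=0$. So \eqref{D} is not established. Since the paper's proof invokes exactly this identity, citing it does not close the gap.

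Your unexecuted fallback cannot rescue the argument either. With the monomial formula you work from, $R_{(j,a_n)}z^{\alpha}=\sum_k a_k z^{\alpha+ke_j}$, the operator is multiplication by $\phi(z_j)=\sum_k a_k z_j^k$. For a polynomial $\phi$, its norm is $\max_{|w|=1}|\phi(w)|$. The lower bound comes from restricting to the invariant subspace $\overline{\text{span}}\{z_j^k: k\ge 0\}$, which is isometric to $H^2(\mathbb{D})$. The upper bound comes from von Neumann's inequality applied to the contraction $f\mapsto z_jf$ on $H^2_d$. For $a=(1,1,-1,0,0,\dots)$ this gives norm $\sqrt{5}$, since $|1+w-w^2|=\sqrt{1+4\sin^2\theta}$ on $w=e^{i\theta}$. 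The sequence $(|a_n|)$ gives $\max|1+w+w^2|=3$. On that reading the statement is false, so no comparison on the blocks $M_\alpha$ can prove it.

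The missing observation is that this monomial formula is inconsistent with the defining formula $R_{(j,a_n)}f=\sum_n a_n\sum_{m=0}^n z_j^{n-m}f_m$. That formula is the one matching $C_j$ in \eqref{def}, and it gives $R_{(j,a_n)}z^{\alpha}=\sum_k a_{|\alpha|+k}z^{\alpha+ke_j}$. Hence the coefficient of $z^{\gamma}$ in $R_{(j,a_n)}f$ is $a_{|\gamma|}\sum_{k=0}^{\gamma_j}c_{\gamma-ke_j}$: every contribution to an output monomial of degree $n$ carries the same scalar $a_n$.

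So the grouping problem you worried about disappears once the diagonal is indexed by total degree, $z^{\beta}\mapsto \frac{|a_{|\beta|}|}{a_{|\beta|}}z^{\beta}$. That operator is unitary and does satisfy the factorization. More directly, the degree-$n$ component of $R_{(j,a_n)}f$ is $a_n\sum_{m=0}^n z_j^{n-m}f_m$, and homogeneous components are mutually orthogonal. Therefore, for every $f\in H^2_d$,
\[
\|R_{(j,a_n)}f\|^2=\sum_{n=0}^{\infty}|a_n|^2\Bigl\|\sum_{m=0}^{n}z_j^{n-m}f_m\Bigr\|^2\in[0,\infty].
\]
The right-hand side depends only on $|a_n|$. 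This gives the norm equality and the equivalence of boundedness at once, with no density or extension argument needed.
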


\begin{proof}
Suppose $\{a_n\}$ is not identically $0$ so that $||D_{(j, \frac{a_n}{|a_n|})}||=||D_{(j, \frac{|a_n|}{a_n})}||=1$. It follows immediately from our factorizations in \ref{D} that $R_{(j, a_n)}$ is bounded if and only if $R_{(j, |a_n|)}$ is bounded. Furthermore,

\begin{equation}
||R_{(j, |a_n|)}|| \leq ||D_{(j,\frac{|a_n|}{a_n})}|| \cdot ||R_{(j, a_n)}||=||R_{(j, |a_n|)}||
\end{equation}

and 

\begin{equation}
||R_{(j, a_n)}|| \leq ||D_{(j,\frac{a_n}{|a_n|})}|| \cdot ||R_{(j, |a_n|)}||=||R_{(j, |a_n|)}||
\end{equation}

from which we conclude our result.
\end{proof}

We next investigate boundedness. Of course this depends on the sequence $\{a_n\}$ which we may assume is positive. In \cite{R2}, Rhaly showed that the Rhaly operator is bounded if and only if $L$ exists and $L<\infty$ where $L=\lim_{n \rightarrow \infty}(n+1)a_n$ and is compact if $L=0$. We demonstrate the same is true for our generalized Rhaly operator.

\begin{Theorem}
Suppose $L=\lim_{n \rightarrow \infty}(n+1)a_n$ exists. If $L < \infty$, then $R_{(j, a_n)}$ is bounded. If $L=\infty$, then $R_{(j, a_n)}$ is unbounded. If $L=0$, then $R_{(j, a_n)}$ is compact.
\end{Theorem}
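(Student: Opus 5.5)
By the preceding theorem we may assume $a_n \geq 0$. The plan is to use the factorization \eqref{B}, $R_{(j,a_n)} = D_{(j,(n+1)a_n)}C_j$, together with Theorem \ref{C}, and to reduce each claim to the one-variable Rhaly facts via the block decomposition \eqref{M} used in the proof of Theorem \ref{C}.

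\textbf{Bounded case, $L<\infty$.} If the limit $L$ is finite, the sequence $b_n=(n+1)a_n$ converges, so $\sup_n b_n<\infty$. The diagonal operator $D_{(j,b_n)}$ is therefore bounded with norm $\sup_n b_n$. Hence
\[
\|R_{(j,a_n)}\| \leq \Bigl(\sup_n (n+1)a_n\Bigr)\,\|C_j\| = 2\sup_n (n+1)a_n < \infty .
\]
One point needs checking: \eqref{B} was stated for bounded operators. So I would verify the identity $D_{(j,b_n)}C_j z^\alpha = R_{(j,a_n)}z^\alpha$ on monomials. Because $D C_j$ is bounded, this identity shows that $R_{(j,a_n)}$ extends boundedly and agrees with the defining series.

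\textbf{Unbounded case, $L=\infty$.} Test on the monomial $z^0 = 1$, whose image is $R_{(j,a_n)}1=\sum_k a_k z_j^k$. Since $\|z_j^k\|=1$ in $H^2_d$, we get $\|R_{(j,a_n)}1\|^2=\sum_k a_k^2$. This series need not diverge, so the single vector $1$ is not enough. Instead restrict to the invariant block $M_0=\overline{\mathrm{span}}\{z_j^k\}$. On $M_0$ the Drury--Arveson norm is the Hardy norm, so $R_{(j,a_n)}|_{M_0}$ is exactly the classical Rhaly matrix acting on $\ell^2$. Rhaly's one-variable theorem \cite{R2} says this matrix is unbounded when $(n+1)a_n\to\infty$, and hence so is $R_{(j,a_n)}$.

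If a self-contained argument is preferred, take $x=(1,\tfrac12,\ldots,\tfrac1N,0,\ldots)$ in $M_0$. Then $(R x)_n = a_n H_{\min(n,N)}$, where $H_m$ denotes the $m$-th harmonic number. Choosing $N$ with $(n+1)a_n\geq K$ for $n\geq N$ gives
\[
\|Rx\|^2 \ \geq\ \sum_{n\ge N} a_n^2 H_N^2 \ \ge\ K^2 H_N^2/(N+1) \ \gtrsim\ K^2\,\|x\|^2 .
\]
I expect this to be the main step to nail down: the vector has to be concentrated at indices below $N$ while the tail of $(a_n)$ is large.

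\textbf{Compact case, $L=0$.} Write $b_n=(n+1)a_n\to 0$ and let $b^{(N)}$ be the truncation of $b$ to indices $n<N$. Then
\[
\bigl\|R_{(j,a_n)} - D_{(j,b^{(N)})}C_j\bigr\| \leq 2\sup_{n\ge N}|b_n| \longrightarrow 0 ,
\]
so it suffices to show that each $D_{(j,b^{(N)})}C_j$ is compact.

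This is the real obstacle. The operator $D_{(j,b^{(N)})}$ projects onto monomials with $\alpha_j<N$, and that span is infinite dimensional when $d\ge2$. Consequently, if compactness is to hold, it must use decay of $C_j$ across the blocks $M_\alpha$ as $|\alpha'|\to\infty$. From the formula in the proof of Theorem \ref{C}, the entries involve $\frac{1}{\alpha_j+|\alpha'|+k+1}$ together with the weight ratio $\frac{(\alpha_j+k)!\,\alpha'!}{(\alpha_j+|\alpha'|+k)!}$ relative to $\frac{\alpha_j!\,\alpha'!}{(\alpha_j+|\alpha'|)!}$. I would try to show that, for $\alpha_j$ and $k$ restricted to the finite range $<N$, the norm of the block $D_{(j,b^{(N)})}C_j|_{M_\alpha}$ tends to $0$ as $|\alpha'|\to\infty$. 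A direct sum of finite-rank blocks whose norms tend to $0$ is compact.

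This decay is plausible because of the factor $1/(|\alpha'|+1)$. However, only the rows with $\alpha_j+k<N$ are kept, so the kept entries are rows of a finite matrix whose entries are $O(1/|\alpha'|)$ once the weight ratio (which is at most $1$) is accounted for. That yields compactness, and letting $N\to\infty$ gives compactness of $R_{(j,a_n)}$.
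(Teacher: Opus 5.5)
Your overall plan (factor through $C_j$, restrict to the $z_j$-block for unboundedness, truncate for compactness) is the paper's. However, the factorization you build on is wrong as you set it up, and the monomial check you announced but did not carry out would have exposed this. From the homogeneous definition, $R_{(j,a_n)}z^{\alpha}=\sum_{k\ge 0}a_{|\alpha|+k}\,z^{\alpha+ke_j}$. The coefficient is indexed by the total degree of the output monomial, exactly as $\frac{1}{|\alpha|+k+1}$ is for $C_j$. Hence $R_{(j,a_n)}=\widetilde{D}\,C_j$ with $\widetilde{D}z^{\beta}=(|\beta|+1)a_{|\beta|}z^{\beta}$, a diagonal indexed by $|\beta|$, not by $\beta_j$.

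You take literally the paper's displayed definition of $D_{(j,b_n)}$, which indexes by $\beta_j$, and with that diagonal the identity fails off $M_0$. For example, take $a_0=1$ and $a_n=0$ for $n\ge 1$, so $L=0$. Then $R_{(j,a_n)}$ is the orthogonal projection onto the constants, while $D_{(j,(n+1)a_n)}C_j z_i=\tfrac12 z_i$ for $i\ne j$. So your estimate $\|R_{(j,a_n)}-D_{(j,b^{(N)})}C_j\|\le 2\sup_{n\ge N}|b_n|$ is false in your setting; in this example the left side is at least $\tfrac12$ for every $N$. Your block-decay argument does correctly show that $D_{(j,b^{(N)})}C_j$ is compact, but that operator does not approximate $R_{(j,a_n)}$.

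With the correct indexing, the obstacle you identified disappears. The truncated diagonal $\widetilde{D}^{(N)}$ is supported on monomials of total degree $<N$, a finite-dimensional set for fixed $d$. So $\widetilde{D}^{(N)}C_j$ has finite rank, and $\|R_{(j,a_n)}-\widetilde{D}^{(N)}C_j\|=\|(\widetilde{D}-\widetilde{D}^{(N)})C_j\|\le 2\sup_{n\ge N}|b_n|\to 0$. This is exactly the paper's ``first $N$ rows'' truncation. The same re-indexing is needed in the bounded case, although there the bound $2\sup_n (n+1)|a_n|$ is unaffected.

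In the unbounded case, restricting to $M_0=\overline{\mathrm{span}}\{z_j^k\}$ and citing Rhaly's one-variable theorem is fine. It is in fact the right invariant subspace; the paper's test functions $g(z_1)$ only work as written when $j=1$.

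Your self-contained alternative, however, does not work. For $x=(1,\tfrac12,\dots,\tfrac1N,0,\dots)$ you have $\|x\|^2\ge 1$ while $H_N^2/(N+1)\to 0$, so the final comparison with $K^2\|x\|^2$ is false. Moreover, $N=N(K)$ can grow arbitrarily fast (e.g.\ $(n+1)a_n=\log\log n$), so $K^2H_N^2/(N+1)$ need not even be unbounded.

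The mass must sit beyond the threshold, not before it. If $(n+1)a_n\ge K$ for $n\ge N$, take $x_m=1$ for $N\le m<M$ and $x_m=0$ otherwise. Then $(Rx)_n=a_n(n-N+1)\ge K\frac{n-N+1}{n+1}$ for $N\le n<M$. Hence $\|Rx\|^2/\|x\|^2\ge \frac{K^2}{M-N}\sum_{n=N}^{M-1}\frac{(n-N+1)^2}{(n+1)^2}\to K^2$ as $M\to\infty$, which gives $\|R\|\ge K$ for every $K$.
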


\begin{proof}

First we prove $L<\infty$ implies boundedness by explicitly computing an upper bound. By \ref{B}, \ref{C}, and the submultiplicativity of the operator norm we have

\begin{equation}
||R_{(j, a_n)}||  \leq ||D_{(j, (n+1)a_n)}|| \cdot ||C_{j}||= 2 ||D_{(j, (n+1)a_n)}||=2 \sup_{n} |(n+1)a_n|.
\end{equation}

which gives us an explicit upper bound for $R_{(j, a_n)}$. 

We next suppose $L=\infty$. For $R_{(j, a_n)}$ to be bounded means that there exists a constant $M>0$ such that for every $f \in H^2_d$,  $||R_{(j, a_n)}f||^2 \leq M^2 ||f||^2$. However, given $f(z)=g(z_1)$, by definition of $H^2_d$, $g$ is in $H^2(\mathbb{D})$. Additionally, $||f||=||g||_{H^2(\mathbb{D})}$. Since $R_{(j, a_n)}$ is unbounded when restricted to functions in one variable, it is unbounded on a subset of the functions in $H^2_d$ and hence unbounded on $H^2_d$.

Now suppose $L=0$. Thus the sequence $\{(n+1)a_n\}$ must decay to zero. It is sufficient to construct a sequence of finite-rank operators whose operator norm limit is $R_{(j, a_n)}$. Define $R_{(j, a_n)}^{(N)}$ to be the operator whose matrix representation is such that the first $N$ rows match $R_{(j, a_n)}$ and the remaining rows are zero. Clearly $R_{(j, a_n)}^{(N)}$  is compact. Next we show that $R_{(j, a_n)}-R_{(j, a_n)}^{(N)}$ converges to $0$ in the operator norm as $N \rightarrow \infty$. 

We then have $(R_{(j, a_n)}-R_{(j, a_n)}^{(N)})=D_{(j, (n+1)a_n)}^{(N)}C_{j}$ where $D_{(j, (n+1)a_n)}^{(N)}$ is the diagonal operator whose first $N$ diagonal elements are zero and the remaining coincide with $D_{(j, (n+1)a_n)}$. Thus

\begin{equation}
||(R_{(j, a_n)}-R_{(j, a_n)}^{(N)})|| =||D_{(j, (n+1)a_n)}^{(N)}C_{j}|| \leq 2 \sup_{n>N} |(n+1)a_n|
\end{equation}

which decays to zero as $N\rightarrow \infty$ and hence $R_{(j, a_n)}$ is the limit of finite rank operators and is thus compact.

\end{proof}

\subsection{The Adjoint}

It is also shown in \cite{P} that the adjoint of $C_j$ acting on the monomial $z^{\beta}$ is given by

\begin{equation}
C_j^*z^{\beta}=\frac{\beta !}{(|\beta|+1)!}\sum_{k=0}^{\beta_j}\left(\frac{(|\beta|-k)!}{(\beta-ke_j)!} \right) z^{\beta-ke_j}.
\end{equation}

Hence when $L<\infty$, we have 

\begin{equation}
R_{(j, a_n)}^*=\left(D_{(j, (n+1)a_n)}C_{j} \right)^*=D_{(j, (n+1)a_n)}^*C_{j}^*
\end{equation}

which gives 

\begin{equation}
R_{(j, a_n)}^*z^{\alpha}=\frac{\beta !}{(|\beta|+1)!}\sum_{k=0}^{\beta_j}\left(\frac{(|\beta|-k)!}{(\beta-ke_j)!} \right) \overline{(\beta_j-k+1)a_{\beta_j-1}}z^{\beta-ke_j}.
\end{equation}

\subsection{Preliminary Spectral Properties}

We first investigate the point spectrum of $R_{(j, a_n)}$. 

\begin{Theorem}\label{s}
Let $R_{(j, a_n)}$ be a Rhaly-type operator. Then $\sigma_p (R_{(j, a_n)}) \subset \{a_0, a_1, ...\}$.
\end{Theorem}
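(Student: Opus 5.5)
The plan is to work with the terraced definition of $R_{(j,a_n)}$ through the homogeneous expansion,
\begin{equation*}
R_{(j, a_n)}f=\sum_{n=0}^{\infty}a_n \sum_{m=0}^n z_j^{n-m}f_m(z),
\end{equation*}
and to compare homogeneous components on both sides of the eigenvalue equation. The point is that $R_{(j,a_n)}$ is lower triangular with respect to degree, and its ``diagonal entry'' on the degree-$n$ homogeneous part is $a_n$. An eigenvalue must therefore equal the diagonal entry at the lowest degree where an eigenvector is nonzero. I deliberately do not use the monomial formula: its coefficient $a_k$ in place of $a_{|\alpha|+k}$ gives a Toeplitz-type operator with constant diagonal $a_0$, which is not the terraced operator in the statement.

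\textbf{Step 1: degree-$n$ component of $R_{(j,a_n)}f$.} Let $P_n$ denote the orthogonal projection of $H^2_d$ onto the homogeneous polynomials of degree $n$. Each term $z_j^{n-m}f_m$ is homogeneous of degree $n$. Hence
\begin{equation*}
P_n R_{(j,a_n)}f = a_n\bigl(f_n + z_j f_{n-1} + \cdots + z_j^{n} f_0\bigr).
\end{equation*}
To justify reading off components, I use that $P_n$ is continuous and that homogeneous pieces of different degrees are orthogonal, as noted in the introduction. This is legitimate whenever $R_{(j,a_n)}$ is bounded, or more generally for $f$ in its domain where the series converges in $H^2_d$.

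\textbf{Step 2: lowest-degree argument.} Suppose $R_{(j,a_n)}f=\lambda f$ with $f\neq 0$. Let $n_0$ be the smallest index with $f_{n_0}\neq 0$; it exists because $f\neq 0$. Apply $P_{n_0}$ to both sides. Every term $z_j^{n_0-m}f_m$ with $m<n_0$ vanishes, so the equation becomes
\begin{equation*}
\lambda f_{n_0} = a_{n_0} f_{n_0}.
\end{equation*}
Since $f_{n_0}\neq 0$, this forces $\lambda=a_{n_0}\in\{a_0,a_1,\dots\}$, which proves the inclusion.

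\textbf{Main obstacle.} The only delicate point is analytic rather than algebraic: identifying the degree-$n_0$ part of $R_{(j,a_n)}f$ with the finite sum above. This requires the defining series to converge in $H^2_d$, so that $P_{n_0}$ may be applied term by term. I will handle it by assuming $R_{(j,a_n)}$ is bounded, for instance when $L<\infty$ by the previous theorem, or by restricting attention to $f$ in the natural domain. Then continuity of $P_{n_0}$ and the mutual orthogonality of the homogeneous pieces $z_j^{n-m}f_m$ of distinct degrees give the claim directly. No use of the norm of $C_j$ from Theorem \ref{C} is needed.
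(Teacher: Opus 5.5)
Your proposal is correct and follows essentially the same argument as the paper: compare the degree-$s$ homogeneous components of $R_{(j,a_n)}f=\lambda f$, namely $a_s\sum_{m=0}^s z_j^{s-m}f_m=\lambda f_s$, and evaluate at the minimal degree $k$ with $f_k\neq 0$ to get $\lambda=a_k$. The paper omits two refinements you add. First, you justify reading off components via $P_{n_0}$ when the series converges in $H^2_d$. Second, you observe that the paper's monomial formula should carry $a_{|\alpha|+k}$ rather than $a_k$. Neither changes the core argument.
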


\begin{proof}

Note that for $R_{(j, a_n)}f=\lambda f$, the $s^{\text{th}}$ homogeneous term is given by $(R_{(j, a_n)})_s=a_s\sum_{m=0}^s z^{s-m}_j f_m(z)$ and thus 

\begin{equation}
a_s\sum_{m=0}^s z^{s-m}_j f_m(z)=\lambda f_s(z).
\end{equation}

We can write this as  

\begin{equation}
a_sf_s+a_s\sum_{m=0}^{s-1} z^{s-m}_j f_m=\lambda f_s \rightarrow (\lambda-a_s)f_s=a_sz_j \sum_{m=0}^{s-1} z_j^{s-1-m}f_m.
\end{equation}

Now, from the $(s-1)^{\text{th}}$ equation we have for $a_{s-1} \neq 0$, 

\begin{equation}
\lambda f_{s-1}=a_{s-1}\sum_{m=0}^{s-1} z^{s-1-m}_j f_m \rightarrow \sum_{m=0}^{s-1} z^{s-1-m}_j f_m=\frac{\lambda}{a_{s-1}}f_{s-1}.
\end{equation}

Substituting this into our previous equation gives 

\begin{equation}
 (\lambda-a_s)f_s=\left(\frac{\lambda a_s}{a_{s-1}}f_{s-1}\right) z_j .
\end{equation}

Suppose $f$ is not identically zero. Then there is a minimal $k \geq 0$ such that $f_k \neq 0$ and $f_j=0$ for all $j <k$. Now it follows that $a_kf_k=\lambda f_k$ and hence $\lambda =a_k$. 
\end{proof}

Additionally, supposing that $a_m \neq0$ for $m>k$, notice that by mathematical induction we obtain

\begin{equation}
 f_{k+n}=\frac{a_k^{n-1}a_{k+n}}{\prod_{m=k+1}^{k+n}(a_k-a_{k+n})}z_j^n f_k.
\end{equation}

Hence $f_s$ for $s>k$ is uniquely determined as a multiple of $z_j^{s-k}$. 

For a given complex sequence $\{a_n\}$, determining whether it actually is an eigenvalue is highly contingent on the sequence. One must first check whether the sequence $\{a_n\}$ is in $p^2_d$. Certainly if there exists a $k\geq 0$ such that $a_n=0$ for $n>k$, i.e. a polynomial, then the point spectrum is a finite set of discrete eigenvalues as described above.

One immediate consequence of \ref{s} is the following:

\begin{Theorem}
If $L=0$ so that $R_{(j, a_n)}$ is compact, then its spectrum is given by $\sigma (R_{(j, a_n)}) \subset \{0\} \cup \{a_0, a_1, ...\}$.
\end{Theorem}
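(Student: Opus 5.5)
The plan is to combine the Riesz--Schauder theory of compact operators with Theorem \ref{s}. By the previous theorem, $L=0$ makes $R_{(j, a_n)}$ a compact operator on $H^2_d$, which is infinite dimensional. For compact operators on an infinite-dimensional Hilbert space we have two standard facts. First, $0\in\sigma(R_{(j, a_n)})$, since a compact operator cannot have a bounded inverse there. Second, every nonzero point of the spectrum is an eigenvalue, by the Fredholm alternative. So
\begin{equation*}
\sigma(R_{(j, a_n)}) \subset \{0\}\cup \sigma_p(R_{(j, a_n)}).
\end{equation*}

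The second step applies Theorem \ref{s}, which gives $\sigma_p(R_{(j, a_n)})\subset\{a_0,a_1,\dots\}$. Combining this with the inclusion above yields $\sigma(R_{(j, a_n)})\subset\{0\}\cup\{a_0,a_1,\dots\}$, which is the claim. Nothing needs to be known about whether a given $a_k$ is actually attained as an eigenvalue; only the inclusion matters.

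The main point to check is that Theorem \ref{s} really covers every eigenvalue, including the cases where some $a_{s}$ vanish. The argument there only uses the lowest nonzero homogeneous component $f_k$ of an eigenfunction $f$. Comparing degree-$k$ terms gives $a_kf_k=\lambda f_k$, and hence $\lambda=a_k$. That comparison never divides by any $a_{s-1}$, so the auxiliary hypothesis $a_{s-1}\neq 0$ appearing in that proof is not needed for the inclusion.

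I would also remark that the spectrum is closed and $a_n\to 0$ when $L=0$. This makes the set $\{0\}\cup\{a_0,a_1,\dots\}$ closed, so the right-hand side is consistent with the accumulation behavior required of the spectrum of a compact operator.
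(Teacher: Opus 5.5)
Your proposal is correct and is essentially the paper's argument. The paper records this as an immediate consequence of Theorem \ref{s}, implicitly using the Riesz--Schauder fact that every nonzero spectral point of a compact operator is an eigenvalue, which is exactly your route. Your check that $\sigma_p(R_{(j, a_n)})\subset\{a_0,a_1,\dots\}$ uses only the lowest nonzero homogeneous component, so the hypothesis $a_{s-1}\neq 0$ plays no role, is a correct and useful clarification.
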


The next step would be to compute the spectrum of $R_{(j, a_n)}$ when $L<\infty$. A useful approach would be to use the decomposition given by \ref{M} and then apply what is known about the spectrum of the Rhaly operator on the Hardy space. They will likely coincide. The author leaves this to the interested reader to demonstrate.

\section{The Roadmap to Future Discovery}

In the introduction, we gave several ways in which the Rhaly operators have been generalized from their action on $\ell^2$. Such generalizations, such as the weighted Hardy spaces, have well-known generalizations in spaces of several variables as well. Triangulating the generalizations in one variable to those in several variables, of course, remains to be done. For example, the Drury-Arveson space is one space on a scale of reproducing kernel Hilbert spaces known as the standard weighted Bergman spaces (or Hardy-Sobolev spaces) given by the reproducing kernels

\begin{equation}
k^{\alpha}(z,w)=\frac{1}{(1-\langle z, w \rangle)^{\alpha}}.
\end{equation}

For $d$-variables, we have $d=1$ the Szego kernel corresponding to the Drury-Arveson space. We have $\alpha=d$ corresponding to the generalized Hardy space and $\alpha=d+1$ corresponding to the generalized Bergman space. Let $\mathcal{O}(\mathbb{B}_d)$ denote the set of function of $d$ complex variables analytic on the unit ball $\mathbb{B}_d$. We may write the definitions in terms of power series as follows:

\begin{equation}
\mathcal{H}_a=\{f(z)=\sum_{\alpha}a_{\alpha}z^{\alpha} \in \mathcal{O}(\mathbb{B}_d) \mid \sum_{\alpha}|a_{\alpha}|^2\frac{\alpha!}{|\alpha|!}(|\alpha|+1)^{1-\alpha}<\infty\}.
\end{equation}

Our Ces\`aro-type operators and Rhaly-type operators have yet to be investigated on these spaces. For more on these spaces, see \cite{Zhao}.

Additionally, properties such as hyponormality have yet to be investigated for the Ces\`aro-type operators. This should be addressed beforing considering the case of the Rhaly-type operators. 

Much work remains to be done. The author leaves it to the enthusiastic reader to enjoy these investigations.

\end{document}